\documentclass[12pt]{amsart} 
\usepackage{amsmath}
\usepackage{amssymb}
\usepackage{amsthm}
\usepackage{mathtools}

\usepackage[utf8]{inputenc}
\usepackage[english]{babel}
\usepackage{color}

\usepackage{wrapfig}

\newtheorem{theorem}{Theorem}[section]

\newtheorem{lemma}[theorem]{Lemma}

\newtheorem{proposition}[theorem]{Proposition}

\newtheorem{remark}{Remark}

\title{Falconer-type estimates for dot products}
\date{\today}
\author{Alex Iosevich}
\address{Department of Mathematics, University of Rochester, Rochester, NY 14627}
\email{iosevich@math.rochester.edu}
\thanks{This material is based on work supported by the National Science Foundation under grant no. HDR TRIPODS - 1934962}
\author{Steven Senger}
\address{Department of Mathematics, Missouri State University, Springfield, MO 65897}
\email{stevensenger@missouristate.edu}

\begin{document}
\maketitle
\begin{abstract} We present a family of sharpness examples for Falconer-type single dot product results. In particular, for $d\geq 2,$ for any $s<\frac{d+1}{2},$ we construct a Borel probability measure $\mu$ satisfying the energy estimate $I_s(\mu)<\infty,$ yet the estimate 
\begin{equation} \label{monalisa} (\mu \times \mu)\{(x,y):1\leq x\cdot y \leq 1+\epsilon\} \leq C\epsilon \end{equation} does not hold with constants independent of $\epsilon$. It is known (\cite{EIT11}) that such an estimate always holds with $C$ independent of $\epsilon$ if $I_{\frac{d+1}{2}}(\mu)<\infty$. Thus our estimate proves the sharpness of the dimensional threshold in this result and generalizes similar results (\cite{Mat95}, \cite{IS16}) established in the case when the dot product $x \cdot y$ is replaced by the Euclidean distance function $|x-y|$, or, more generally, ${||x-y||}_K$, the distance that comes from the norm induced by a symmetric convex body $K$ with a smooth boundary and non-vanishing curvature. Our constructions are partially based on ideas that come from discrete incidence theory.

\end{abstract}

\section{Introduction}
In \cite{Fal86}, Falconer conjectured that any subset of $\mathbb R^d$ with Hausdorff dimension greater than $\frac{d}{2}$ would determine a set of distances with positive Lebesgue measure. In the same paper, he proved that a weaker version of the conjecture holds for sets whose Hausdorff dimension is more than $\frac{d+1}{2}.$ While this threshold has been lowered over the years (see \cite{Mat87, Mat95, W99, Erd05, GIOW}), the full conjecture is still open. The key to Falconer's original result was based on the following lemma, which states that if a certain energy estimate holds, then we are guaranteed a bound on the product measure of pairs of points separated by approximately any fixed distance.
\begin{lemma}\label{key}
Given a compactly supported Borel measure $\mu$, there exists a constant $C>0$ such that
\begin{equation}\label{muIncidence} (\mu\times\mu)\{(x,y): 1 \leq |x-y| \leq 1+\epsilon\} \leq C I_{\frac{d+1}{2}}(\mu) \epsilon,\end{equation} where 
\begin{equation}\label{muEnergy} I_s(\mu) \equiv \int\int |x-y|^{-s}d\mu(x)d\mu(y). \end{equation} \end{lemma}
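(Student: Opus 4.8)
The plan is to run Falconer's original Fourier-analytic argument. Throughout I assume $d\geq 2$ (the case of interest), and I may assume $I_{\frac{d+1}{2}}(\mu)<\infty$, since otherwise \eqref{muIncidence} is vacuous. I may also assume $0<\epsilon\leq 1$: if $\epsilon>1$ the left-hand side of \eqref{muIncidence} is at most $\mu(\mathbb{R}^d)^2$, and since $\mu$ is supported in some ball of radius $R$ one has $|x-y|\leq 2R$ on the support, whence $I_{\frac{d+1}{2}}(\mu)\geq (2R)^{-\frac{d+1}{2}}\mu(\mathbb{R}^d)^2$ and \eqref{muIncidence} holds trivially with a constant depending on $R$. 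Write $A_\epsilon=\{z\in\mathbb{R}^d:1\leq|z|\leq 1+\epsilon\}$, so that the left-hand side of \eqref{muIncidence} equals $\int\!\int \mathbf{1}_{A_\epsilon}(x-y)\,d\mu(x)\,d\mu(y)$.

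First I would pass to a smooth compactly supported majorant of the rough cutoff. Fix $\varphi\in C_c^\infty(\mathbb{R}^d)$ with $\varphi\geq 0$, $\int\varphi=1$, and $\mathrm{supp}\,\varphi\subset\{|z|\leq 1\}$, put $\varphi_\epsilon(z)=\epsilon^{-d}\varphi(z/\epsilon)$, and set $h_\epsilon=\mathbf{1}_{\widetilde{A}_\epsilon}*\varphi_\epsilon$ with $\widetilde{A}_\epsilon=\{1-2\epsilon\leq|z|\leq 1+2\epsilon\}$. A one-line check gives $h_\epsilon\geq \mathbf{1}_{A_\epsilon}$ pointwise. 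Since $h_\epsilon$ is Schwartz, $\widehat{h_\epsilon}\in L^1$, so Fourier inversion for $h_\epsilon$ together with Fubini (legitimate because $\|\widehat{h_\epsilon}\|_1\,\mu(\mathbb{R}^d)^2<\infty$) gives, with $\widehat{\mu}(\xi)=\int e^{-2\pi i x\cdot\xi}\,d\mu(x)$,
\[
\int\!\int \mathbf{1}_{A_\epsilon}(x-y)\,d\mu(x)\,d\mu(y)\ \leq\ \int\!\int h_\epsilon(x-y)\,d\mu(x)\,d\mu(y)\ =\ \int \widehat{h_\epsilon}(\xi)\,|\widehat{\mu}(\xi)|^2\,d\xi\ \leq\ \int |\widehat{h_\epsilon}(\xi)|\,|\widehat{\mu}(\xi)|^2\,d\xi .
\]
Using a smooth majorant rather than $\mathbf{1}_{A_\epsilon}$ itself is what lets me avoid mollifying $\mu$ and worrying about how much $\mu\times\mu$-mass lies on the boundary spheres of the annulus.

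The key step, and the only place geometry enters, is the bound $|\widehat{h_\epsilon}(\xi)|\lesssim \epsilon\,(1+|\xi|)^{-\frac{d-1}{2}}$ for $0<\epsilon\leq 1$, with implied constant depending only on $d$. Writing $\sigma$ for surface measure on $S^{d-1}$, polar coordinates give $\widehat{\mathbf{1}_{\widetilde{A}_\epsilon}}(\xi)=\int_{1-2\epsilon}^{1+2\epsilon}\widehat{\sigma}(r\xi)\,r^{d-1}\,dr$, and the classical stationary-phase estimate $|\widehat{\sigma}(\xi)|\lesssim (1+|\xi|)^{-\frac{d-1}{2}}$ — which encodes the non-vanishing Gaussian curvature of $S^{d-1}$; see e.g. \cite{Mat95} — yields $|\widehat{\mathbf{1}_{\widetilde{A}_\epsilon}}(\xi)|\lesssim \epsilon\,(1+|\xi|)^{-\frac{d-1}{2}}$, since $r\asymp 1$ and the interval of integration has length $4\epsilon$. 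As $\widehat{h_\epsilon}(\xi)=\widehat{\mathbf{1}_{\widetilde{A}_\epsilon}}(\xi)\,\widehat{\varphi}(\epsilon\xi)$ and $|\widehat{\varphi}(\epsilon\xi)|\leq\|\varphi\|_1=1$, the same bound holds for $\widehat{h_\epsilon}$.

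Finally I would feed this into the frequency integral. Since $0<\frac{d+1}{2}<d$, the Riesz-potential identity $\widehat{|\cdot|^{-s}}=c_{d,s}\,|\cdot|^{-(d-s)}$ (as tempered distributions) gives $I_{\frac{d+1}{2}}(\mu)=c_d\int|\xi|^{-\frac{d-1}{2}}\,|\widehat{\mu}(\xi)|^2\,d\xi$, the exponent being exactly $d-\frac{d+1}{2}=\frac{d-1}{2}$; this also ensures that the integral on the right above converges under the standing assumption $I_{\frac{d+1}{2}}(\mu)<\infty$. Since $(1+|\xi|)^{-\frac{d-1}{2}}\leq 1\leq|\xi|^{-\frac{d-1}{2}}$ on $\{|\xi|\leq 1\}$ and $(1+|\xi|)^{-\frac{d-1}{2}}\leq|\xi|^{-\frac{d-1}{2}}$ on $\{|\xi|>1\}$, combining everything gives
\[
\int\!\int \mathbf{1}_{A_\epsilon}(x-y)\,d\mu(x)\,d\mu(y)\ \lesssim\ \epsilon\int|\xi|^{-\frac{d-1}{2}}\,|\widehat{\mu}(\xi)|^2\,d\xi\ \asymp\ \epsilon\,I_{\frac{d+1}{2}}(\mu),
\]
which is \eqref{muIncidence}. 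I do not expect a serious obstacle here: the only substantive input is the curvature-driven decay of $\widehat{\sigma}$, and the delicate bookkeeping point is choosing the smooth majorant so that it simultaneously dominates $\mathbf{1}_{A_\epsilon}$ and has Fourier transform of size $\epsilon\,(1+|\xi|)^{-\frac{d-1}{2}}$ — which works precisely because both the fattening of the annulus and the mollification scale are comparable to $\epsilon$.
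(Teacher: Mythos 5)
The paper does not prove Lemma \ref{key}; it cites it to Falconer \cite{Fal86} (and it appears in \cite{Mat95}), so there is no in-text proof to compare against. Your argument is precisely the standard Fourier-analytic proof used there: pass to a smooth majorant of the annulus indicator, apply Plancherel to convert the double integral into a frequency integral against $|\widehat{\mu}|^2$, use the stationary-phase decay $|\widehat{\sigma}(\xi)|\lesssim (1+|\xi|)^{-\frac{d-1}{2}}$ of the sphere, and finish with the Riesz-potential identity $I_s(\mu)\asymp\int|\xi|^{-(d-s)}|\widehat{\mu}(\xi)|^2\,d\xi$. All the steps are sound: the majorization $h_\epsilon\geq\mathbf{1}_{A_\epsilon}$ is checked correctly, the Fubini justification via $\|\widehat{h_\epsilon}\|_1<\infty$ is legitimate, the polar-coordinate formula for $\widehat{\mathbf{1}_{\widetilde{A}_\epsilon}}$ is right, and the exponent arithmetic $d-\frac{d+1}{2}=\frac{d-1}{2}$ matches the spherical decay exactly, which is the heart of why $\frac{d+1}{2}$ is the threshold.

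One bookkeeping point: the derivation of $|\widehat{\mathbf{1}_{\widetilde{A}_\epsilon}}(\xi)|\lesssim\epsilon(1+|\xi|)^{-\frac{d-1}{2}}$ via ``$r\asymp 1$ on $[1-2\epsilon,1+2\epsilon]$'' implicitly needs $1-2\epsilon$ bounded below by a positive constant, so the standing reduction should be to $\epsilon\leq 1/4$ (say) rather than $\epsilon\leq 1$; for $\epsilon\in(1/4,\infty)$ the trivial bound you already record covers it. This is a constant adjustment, not a gap. No substantive issue.
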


In particular, this lemma says that if $\mu$ is supported on a set of Hausdorff dimension $>\frac{d+1}{2}$, then the estimate (\ref{muIncidence}) holds with uniform constants. It is also not difficult to see that the proof easily extends to the case when the Euclidean norm $|\cdot|$ is replaced by ${|| \cdot ||}_K$, the norm induced by a symmetric convex body $K$ with a smooth boundary and non-vanishing Gaussian curvature. 

In \cite{Mat95}, Mattila showed that Falconer's lemma is sharp in the sense that if $d=2$, then for any $s<\frac{3}{2},$ there exists a measure obeying $I_s(\mu)<\infty$, and yet 
$$ \limsup_{\epsilon \to 0} \epsilon^{-1} (\mu\times\mu)\{(x,y): 1 \leq |x-y| \leq 1+\epsilon\}=\infty.$$ 

This result was extended to three dimensions by the authors of this paper (\cite{IS16}). In higher dimensions, the sharpness of Falconer's lemma is still open in the case of the Euclidean distance, but in the same paper, the authors of this showed that for any $s<\frac{d+1}{2},$ there exists a compactly supported Borel measure $\mu$ with $I_s(\mu)<\infty$, $s<\frac{d+1}{2}$,  and yet 
$$ \limsup_{\epsilon \to 0} \epsilon^{-1} (\mu\times\mu)\{(x,y): 1 \leq {||x-y||}_K \leq 1+\epsilon\}=\infty,$$ where ${||\cdot||}_K$ is the norm induced by a symmetric convex body $K$ obtained by gluing the paraboloid in such a way that results in a smooth symmetric convex body with a smooth boundary and non-vanishing Gaussian curvature. 

In recent decades, the Falconer distance problem has been generalized in a variety of directions, each having its own geometric, analytic, and combinatorial nuances. For example, given a compact set $E$ in ${\Bbb R}^d$, $d \ge 2$, we can ask whether the dot product set 
$$\Pi(E)=\{x \cdot y: x,y \in E\}$$ has positive Lebesgue measure. A step in this direction was taken by Suresh Eswarathasan, the first listed author of this paper, and Krystal Taylor (\cite{EIT11}) who proved that the Lebesgue measure of $\Pi(E)$ is indeed positive if the Hausdorff dimension of $E$ is greater than $\frac{d+1}{2}$. They proved this by showing that the analog of Lemma \ref{key} holds if $|x-y|$ is replaced by $x \cdot y$, i.e. 
\begin{equation} \label{dpfalconer} (\mu \times \mu)\{(x,y): 1 \leq x \cdot y \leq 1+\epsilon \} \leq C I_{\frac{d+1}{2}}(\mu) \epsilon. \end{equation} 

Indeed, they proved a much more general result where the conclusion of the Falconer lemma holds if $|x-y|$ is replaced by any $\phi(x,y)$ smooth away from the diagonal and having a non-zero Monge-Ampere determinant. In this paper we prove that (\ref{dpfalconer}) is sharp. To state this precisely, we introduce some notation and state our main result. Here and throughout, $X \lesssim Y$ means that there exists a uniform constant $C>0$ such that $X \leq CY$. Also, we write $X\approx Y$ to mean that both $X\lesssim Y$ and $Y\lesssim X.$

\begin{theorem}\label{main}
For any dimension $d\geq 3,$ and any $s\in\left(\frac{d}{2},\frac{d+1}{2}\right),$ there exists a Borel measure $\mu$ on $\mathbb R^d$ such that $I_s(\mu)\approx 1$ and for any $\epsilon >0,$
$$(\mu \times \mu) \left\{(x,y) : 1 \leq x \cdot y \leq 1+ \epsilon \right\} \approx \epsilon^{\frac{2s}{d+1}}.$$

In particular, 
$$ \limsup_{\epsilon \to 0} \epsilon^{-1} (\mu \times \mu) \left\{(x,y) : 1 \leq x \cdot y \leq 1+ \epsilon \right\}=\infty.$$ 
\end{theorem}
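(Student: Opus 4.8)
The plan is to realize the required measure as a weighted superposition, over a lacunary sequence of scales, of thickened rescaled integer lattices, the scales arranged so that the lattices at different scales interact through the bilinear form $x\cdot y$. Since for fixed $x$ the set $\{y:x\cdot y=1\}$ is an affine hyperplane, the event $1\le x\cdot y\le 1+\epsilon$ is a point--hyperplane incidence up to thickness $\epsilon$, and the starting point is the elementary number-theoretic fact that for $d\ge 3$ the number of solutions of $a\cdot b=n$ with $a,b\in\{1,\dots,m\}^d$ is $\approx m^{2d-2}$ for every $n$ in a fixed subinterval of $[d,dm^2]$ bounded away from its endpoints. Equivalently, a positive proportion of all pairs of points of the lattice $\tfrac1m\{1,\dots,m\}^d$ have dot product within $O(m^{-2})$ of $1$, which is precisely the excess of incidences one wants; this counting is clean for $d\ge 3$, whereas $d=2$ introduces logarithmic losses (as in Mattila's distance example), which is why the theorem is stated for $d\ge 3$.

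A single scale already falls short: if $\mu$ is the normalized restriction of Lebesgue measure to the $\delta$-neighborhood of $\tfrac1m\{1,\dots,m\}^d$ inside a fixed cube, with $\delta=m^{-d/s}$, then a direct computation gives $I_s(\mu)\approx 1$ — the self-energy of the $\delta$-balls is tuned by the choice of $\delta$, and the interaction energy is comparable to that of a smooth density on the cube, hence $\approx 1$ since $s<d$ — but $(\mu\times\mu)\{1\le x\cdot y\le 1+\epsilon\}\approx\epsilon$ for all $0<\epsilon\lesssim 1$; this is forced, since such $\mu$ has $I_{\frac{d+1}{2}}(\mu)<\infty$ and thus falls under (\ref{dpfalconer}). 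The gain must therefore come from summing infinitely many scales so that $I_{\frac{d+1}{2}}(\mu)=\infty$, and from positioning the successive lattices so that the dominant contribution to the slab at a given $\epsilon$ comes from pairs $(x,y)$ drawn from two comparable scales $\approx\log(1/\epsilon)$ rather than from a single one.

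Concretely, I would take $\mu=\sum_j c_j\mu_j$, where $\mu_j$ is a thickened rescaled lattice at scale $\delta_j=2^{-j}$ placed inside a fixed compact set, the weights $c_j$ and the thickening exponent calibrated as powers of $\delta_j$ determined by $s$ and $d$. Two estimates then have to be carried out. First, $I_s(\mu)\approx 1$: split $I_s$ into the self-energies of the individual $\delta_j$-balls and the interactions between distinct balls; the former is a geometric-type series that converges and is $\approx 1$ because of the calibration of the thickening, and the latter is controlled by the fact that an equidistributed lattice, hence $\mu$, assigns mass $\lesssim\epsilon$ to every hyperplane-slab of width $\epsilon$, so the interaction energy is again comparable to that of a smooth density. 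Second, the slab estimate: for each $\epsilon$, decompose the contributing pairs $(x,y)$ according to the pair of scales $(j,k)$ they come from, bound the $(j,k)$-contribution by (the number of incident pairs consisting of a $\delta_j$-ball and the $\epsilon$-slab about a hyperplane dual to a $\delta_k$-point) times (the product of the two ball-masses) times (the proportion of each product-ball lying in the slab, which is $\approx\min(1,\epsilon/\delta_j)$ when $\delta_j\ge\delta_k$), and sum the resulting double series; it is dominated by the near-diagonal range $j\approx k\approx\log(1/\epsilon)$ and evaluates to $\approx\epsilon^{2s/(d+1)}$, the constraint $s<\frac{d+1}{2}$ appearing precisely as the condition that makes the series sum to that power rather than to $\epsilon$.

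The main obstacle is obtaining the two-sided bound. The lower bound on the slab measure is the soft direction, exhibiting many incidences from the lattice structure. The upper bound, and the exact power $\epsilon^{2s/(d+1)}$, is harder: one must rule out unanticipated concentration, i.e. show that coincidental near-solutions of $x\cdot y\approx 1$ with $x,y$ from widely separated scales cannot dominate — this is where equidistribution of the lattices (no excess mass on hyperplanes, and more generally on lower-dimensional algebraic sets) is used essentially — and one must calibrate the two free exponents, the weights $c_j$ and the thickening, simultaneously against $s$ so that $I_s(\mu)\approx 1$ and the slab series sums to exactly $\epsilon^{2s/(d+1)}$, a balancing that is possible only in the stated range $s\in(d/2,(d+1)/2)$ and for $d\ge 3$.
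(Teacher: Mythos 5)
Your approach is genuinely different from the paper's, but it has a gap that I do not see how to repair. The paper works at a single scale $q\approx\epsilon^{-s/(d+1)}$ and uses an anisotropic point set $E=A^{d-1}\times B$ (with $|A|=q$, $|B|=q^2$, so $q^{d+1}$ points at separation $q^{-2}$) together with a dual set $F$ of hyperplane normals --- a Szemer\'edi--Trotter sharpness configuration --- whereas you propose a multi-scale superposition of thickened copies of the full integer lattice $\tfrac1m\{1,\dots,m\}^d$. The choice of building block is where the argument fails.

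The count you invoke, $\approx m^{2d-2}$ solutions of $a\cdot b=n$ out of $m^{2d}$ pairs, is equidistribution, not excess: the proportion of pairs with $a\cdot b=n$ for a fixed typical $n$ is $m^{-2}$, which after rescaling is exactly the quantization scale of $x\cdot y$ on $\tfrac1m\{1,\dots,m\}^d$. This tends to zero, so it is not the ``positive proportion'' your text asserts, and more to the point it yields no gain over the trivial slab bound. Indeed, thicken each lattice point to radius $\delta=m^{-d/s}$ (so $\delta>m^{-2}$ since $s>d/2$); on a product ball $B_a\times B_b$ the quantity $x\cdot y$ sweeps an interval of length $\approx\delta$, so the slab $\{1\le x\cdot y\le 1+\epsilon\}$ captures a fraction $\min(1,\epsilon/\delta)$ of each contributing ball pair, and there are $\approx\max(\delta,\epsilon)m^{2d}$ contributing pairs; the product is $\approx\epsilon$ in every regime. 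The same computation applies to the cross terms: for two lattices at scales $m_j<m_k$ the values $a\cdot b$ are equidistributed in $\tfrac{1}{m_jm_k}\mathbb{Z}$ with $\approx(m_jm_k)^{d-1}$ solutions per value, giving $(\mu_j\times\mu_k)\{1\le x\cdot y\le 1+\epsilon\}\approx\epsilon$ uniformly in $j,k$. Hence $(\mu\times\mu)(\mathrm{slab})=\sum_{j,k}c_jc_k(\mu_j\times\mu_k)(\mathrm{slab})\approx\epsilon(\sum_jc_j)^2\approx\epsilon$, not $\epsilon^{2s/(d+1)}$. No calibration of the weights $c_j$ or of the thickening exponent can change this, because the upper bound $\lesssim\epsilon$ holds termwise; a superposition cannot manufacture an excess that is absent from every single summand.

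The paper's construction produces a genuine excess precisely because it is not a lattice: the $n=q^{d+1}$ points of $E\cup F$ sit at separation $q^{-2}$, far sparser than the $q^{2d}$-point grid of spacing $q^{-2}$, and the dual structure of $F$ forces $\approx q^{2d}=n^{2-2/(d+1)}$ pairs to have $x\cdot y=1$ exactly --- an exponent exceeding the integer lattice's $n^{2-2/d}$. Thickening to $\epsilon$-cubes with $\epsilon=q^{-(d+1)/s}$, which is $<q^{-2}$ exactly when $s<\tfrac{d+1}{2}$, keeps all $q^{2d}$ pairs inside the slab, giving slab measure $\approx q^{2d}\epsilon^{2s}=\epsilon^{2s/(d+1)}\gg\epsilon$, while $I_s'(E\cup F)\lesssim 1$ is established by a separate computation. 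If you want a single $\mu$ valid for every $\epsilon>0$ (which would actually strengthen the literal statement proved in the paper, whose $\mu$ depends on $\epsilon$), the natural route is to superpose copies of $E_q\cup F_q$ over a lacunary sequence of values of $q$, not copies of the integer lattice.
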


\vskip.125in 

\begin{remark} The analogous statement of Theorem \ref{main} for two dimensions was shown by Suresh Eswarathasan, the first listed author of this paper, and Krystal Taylor (\cite{EIT11}) using a simpler construction. The construction used in this paper can also work when $d=2,$ but the estimates become more delicate, and the analysis becomes much more cumbersome than for it is for $d\geq 3.$ Because a simpler proof for the two dimensional case already exists, we only prove the result for higher dimensions here.
\end{remark}

\begin{remark} The authors believe that the estimate 
\begin{equation} \label{generalMAest} (\mu \times \mu) \{(x,y): t \leq \phi(x,y) \leq t+\epsilon \} \lesssim I_{\frac{d+1}{2}}(\mu) \epsilon, \end{equation} which is shown in (\cite{EIT11}) to hold for all functions $\phi$ satisfying the non-vanishing Monge-Ampere determinant condition, is sharp in the sense that for any $s<\frac{d+1}{2}$ there exists a compactly supported Borel measure $\mu$ with $I_s(\mu)<\infty$ for which (\ref{generalMAest}) fails. We hope to address this issue in the sequel. \end{remark} 

The authors would like to thank Adam Sheffer for helpful conversations about the discrete incidence construction in this paper, and Thang Pham for pointing out an error in an earlier version.

\vskip.25in 

\section{Proof of Theorem \ref{main}} 

\vskip.125in 

\subsection{Preliminaries}
We begin with the celebrated Szemer\'edi-Trotter Theorem, from \cite{ST83}.
\begin{theorem}\label{ST}[Szemer\'edi-Trotter]
Given a set of $n$ points and $m$ lines in $\mathbb R^2$, the number of incidences is bounded above by
$$I \lesssim (mn)^\frac{2}{3}+m+n.$$
\end{theorem}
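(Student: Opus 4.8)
The plan is to prove the Szemer\'edi--Trotter bound by Sz\'ekely's graph-drawing argument, whose engine is the \emph{crossing number inequality}: there is an absolute constant $c>0$ such that every simple graph $G$ admitting a plane drawing, with $V$ vertices and $E\geq 4V$ edges, has crossing number $\mathrm{cr}(G)\geq cE^3/V^2$. I would first record this inequality in two stages. Stage one is the linear estimate $\mathrm{cr}(G)\geq E-3V$, immediate from Euler's formula: deleting one edge from each crossing in an optimal drawing leaves a planar graph, which has at most $3V-6$ edges, so more than $E-3V$ crossings were needed. Stage two amplifies this probabilistically. Fix an optimal drawing $D$ of $G$; in an optimal drawing two edges sharing a vertex never cross, so every crossing involves four distinct vertices. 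Keep each vertex independently with probability $p=4V/E\in(0,1]$, and restrict $D$ to the surviving vertices. Since an edge survives iff both its endpoints do, and a crossing survives iff all four of its vertices do, taking expectations in the always-valid bound $\mathrm{cr}(G_p)\geq E(G_p)-3V(G_p)$ gives $p^4\,\mathrm{cr}(G)\geq p^2E-3pV$; dividing by $p^4$ and inserting $p=4V/E$ yields $\mathrm{cr}(G)\geq \tfrac{1}{64}E^3/V^2$.

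Now fix $n$ points and $m$ lines in $\arr^2$ with $I$ incidences. Discarding every line that contains at most one of the points decreases $I$ by at most $m$ and does not increase $m$ or $n$, so it suffices to bound the number of incidences under the assumption that every line carries at least two of the points. Build a graph $G$ on the $n$ points: for each line $\ell$, join every pair of points that are consecutive along $\ell$. Since two distinct points lie on a unique line, $G$ is simple, and a line carrying $k_\ell\geq 2$ points contributes $k_\ell-1$ edges, so $E=\sum_\ell(k_\ell-1)\geq I-m$. Drawing each edge as the segment of its line produces a plane drawing of $G$ in which every crossing is an intersection point of two of the $m$ lines; as two distinct lines meet at most once, $\mathrm{cr}(G)\leq\binom{m}{2}<m^2$.

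It remains to play the two estimates for $\mathrm{cr}(G)$ against each other. If $E<4n$, then $I-m\leq E<4n$, so $I<4n+m$. Otherwise $E\geq 4n$, the crossing number inequality applies, and $m^2>\mathrm{cr}(G)\geq \tfrac{1}{64}E^3/n^2\geq \tfrac{1}{64}(I-m)^3/n^2$, whence $(I-m)^3<64\,m^2n^2$, i.e. $I-m\lesssim (mn)^{2/3}$ and therefore $I\lesssim (mn)^{2/3}+m$. In either case $I\lesssim (mn)^{2/3}+m+n$, which is the assertion. The only genuinely delicate point is the probabilistic amplification in Stage two — in particular the observation that an optimal drawing has no crossings between adjacent edges, which is what makes the crossing-survival probability exactly $p^4$. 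Everything else is bookkeeping with Euler's formula and the elementary fact that two lines cross at most once.
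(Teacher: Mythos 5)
Your proof is correct, but note that the paper offers no proof of this statement at all: Theorem \ref{ST} is quoted as a known result from \cite{ST83} and used as a black box (indeed it is only cited as motivation for the discrete construction, not even invoked in the estimates). What you have written out is Sz\'ekely's crossing-number proof, and all the delicate points are handled properly: the linear bound $\mathrm{cr}(G)\geq E-3V$ from Euler's formula, the random-sampling amplification with $p=4V/E$ (including the observation that adjacent edges do not cross in an optimal drawing, so each crossing survives with probability exactly $p^4$), the preliminary removal of lines carrying at most one point (costing at most $m$ incidences) so that $E\geq I-m$, the simplicity of the graph because two points determine a unique line, and the bound $\mathrm{cr}(G)\leq\binom{m}{2}$ because two lines cross at most once and the edges along a single line are internally disjoint. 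The case split on whether $E\geq 4n$ correctly produces the $m+n$ terms. This is a genuinely more elementary and self-contained route than the original cell-decomposition argument of Szemer\'edi and Trotter cited in the paper, at the cost of being specific to the real plane; for the purposes of this paper either is more than sufficient, since the theorem is only used heuristically.
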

We now record some illustrative discrete constructions and indicate why they cannot be extended to the continuous setting as in Theorem \ref{main}.
\begin{proposition}\label{silly}
In $\mathbb R^2,$ for any large finite $n\in \mathbb N$, there exists a set $E$ of $n$ points with $\approx n^2$ occurrences of the dot product zero. In $\mathbb R^3,$ for any large finite $n\in \mathbb N$, there exists a set $E$ of $n$ points with $\approx n^2$ occurrences of any dot product.
\end{proposition}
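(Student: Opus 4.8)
I need to construct point sets achieving $\approx n^2$ occurrences of a fixed dot product value. The plan is to exploit the linear structure of the locus $\{y : x \cdot y = c\}$, which for fixed $x \neq 0$ is a hyperplane (a line in $\mathbb{R}^2$, a plane in $\mathbb{R}^3$), so that "many points realizing the dot product $c$ with many points" becomes a point-hyperplane incidence problem.

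For the $\mathbb{R}^2$ case with dot product zero: I would take $E$ to consist of roughly $n/2$ points on the $x$-axis and roughly $n/2$ points on the $y$-axis (say, at integer coordinates $(1,0),\dots,(n/2,0)$ and $(0,1),\dots,(0,n/2)$). Every point on the $x$-axis is orthogonal to every point on the $y$-axis, so we immediately get $\approx (n/2)^2 \approx n^2$ ordered pairs with dot product zero. One should note the upper bound $\leq n^2$ is trivial since there are only $n^2$ ordered pairs, giving the claimed $\approx n^2$.

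For the $\mathbb{R}^3$ case with an arbitrary target value $c$: the plane $\{y : x \cdot y = c\}$ is a genuine $2$-dimensional affine plane for each fixed $x$, so I have more room. I would aim to place the $n$ points so they split into two families of size $\approx n/2$ — call them $A$ and $B$ — such that every $a \in A$ and every $b \in B$ satisfy $a \cdot b = c$. This is equivalent to asking that $B$ lie in the common intersection $\bigcap_{a \in A}\{y : a \cdot y = c\}$. If I choose all points of $A$ to lie in a single affine line $\ell$ (not through the origin, chosen so the constraint is consistent), then $\bigcap_{a \in \ell}\{y : a\cdot y = c\}$ is the orthogonal complement structure cut out by $\ell$, which is itself an affine line in $\mathbb{R}^3$ — so I can place $\approx n/2$ points of $B$ on that line. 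Concretely: pick a plane $H$ at distance $c/|v|$ from the origin with unit normal $v$; put $A$ = many points on a line inside $H$... actually more cleanly, fix $A$ to lie on an affine line $\ell = p + \mathbb{R}w$ and solve $x \cdot y = c$ for all $x \in \ell$ simultaneously, i.e. $p \cdot y = c$ and $w \cdot y = 0$, a system of two independent linear equations whose solution set is an affine line $m$; place $\approx n/2$ points of $B$ on $m$. Choosing $p, w$ generically (with $p \neq 0$, $w \neq 0$, $w$ not parallel to $p$) makes $m$ nonempty and one-dimensional, and one checks $\ell$ itself is nonempty and the points can be taken distinct. Then all $\approx (n/2)^2 \approx n^2$ pairs in $A \times B$ realize the dot product $c$.

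The main thing to be careful about — rather than a deep obstacle — is genericity: ensuring in the $\mathbb{R}^3$ construction that the two hyperplane conditions defining $m$ are linearly independent (so $m$ is a line, not a plane or empty) and that the target value $c$ is actually attained, which forces the mild nondegeneracy conditions on $p$ and $w$ noted above; one should also record that this construction is genuinely two-dimensional and has no analogue preserving an energy bound like $I_s(\mu) < \infty$, which is precisely why Theorem \ref{main} requires the more elaborate construction of the next subsection rather than a naive continuous limit of these examples.
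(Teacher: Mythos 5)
Your proof is correct and takes essentially the same approach as the paper: $n/2$ points on each of two coordinate axes in $\mathbb{R}^2$, and $n/2$ points on each of two affine lines in $\mathbb{R}^3$ chosen so that every cross-pair realizes the target dot product. The paper simply writes down the explicit lines $\{(1,y,0)\}$ and $\{(\alpha,0,z)\}$, which is the special case $p=(1,0,0)$, $w=(0,1,0)$ of your more general parameterization, so the two arguments are the same construction phrased at different levels of generality.
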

\begin{proof} In $\mathbb R^2,$ for we arrange $n/2$ points along the $x$-axis and $n/2$ points along the $y$-axis. Note that any point on the $x$-axis is orthogonal to any point on the $y$-axis, and there are $n^2/4$ such point pairs. In $\mathbb R^3,$ for any $\alpha \in \mathbb R,$ we arrange $n/2$ points along the line $\{(1,y,0):y\in \mathbb R\}$ and arrange $n/2$ points along the line $\{(\alpha,0, z):z\in \mathbb R\}$. Similar to the previous case, the dot product of any point from the first line with any point from the second line will be $\alpha$, and we will again have $n^2/4$ such point pairs.
\end{proof}
In a rough sense, both of the constructions in Proposition \ref{silly} are too ``low-dimensional" to be used to construct a measure as in Theorem \ref{main}. To quantify this, if we follow the procedure detailed below for either of these constructions, we will not get the corresponding energy bound, which would lead to an unbounded $I_s(\mu)$ for $s>1$. It should be noted that both of these examples were inspired by the celebrated Lenz construction in $\mathbb R^4$. It consists of $n/2$ points on a unit circle in the first two dimensions and $n/2$ points on a circle in the second two dimensions, and has $n^2/4$ occurrences of the distance $\sqrt 2$, measured between points from the different circles.

\subsection{Constructing the measure}
First, for any large, finite $n\in \mathbb N$, we will construct a set of $\approx n$ points in $[0,2]^d$ that has $\approx n^\frac{2d}{d+1}$ pairs of points whose dot product is 1. This construction is motivated by a well-known sharpness example for Theorem \ref{ST}. Next, we will quantify some properties of the point set. Finally, we will use an infinite sequence of such sets to generate a measure $\mu$ satisfying the properties of Theorem \ref{main}.
\subsubsection{Discrete construction}\label{discreteSet}
Fix a large, finite, $q\in \mathbb N,$ such that $q^{d+1}\approx n$. Define
$$A:=\left\{\frac{q+i}{2q}:i=1,\dots, q\right\},$$
and
$$B:=\left\{\frac{q^2+i}{2q^2}:i=1,\dots,q^2\right\}.$$
Let $A^k$ denotes a Cartesian product of $k$ copies of $A$, and define $$E:=A^{d-1}\times B \subset [0,1]^2.$$ We can see that $|E|=q^{d+1}.$ Next, we turn our attention to another set of points defined by $A$ and $B$. Let $(-A)$ denote the additive complements of $A$, namely $(-A):=\{-a:a\in A\}.$
$$F:=\left\{\left(\frac{-m_1}{b}, \frac{-m_2}{b}, \dots, \frac{-m_{d-1}}{b}, \frac{1}{b}\right):m_{j}\in(-A), b\in B \right\}.$$
Finally, given a $d$-tuple, $(m_1, m_2, \dots, m_{d-1},b),$ define the hyperplane
$$H(m_1, m_2, \dots, m_{d-1},b) := \left\{ (x_1, x_2, \dots, x_d)\in \mathbb R^d : x_d = \left(\sum_{j=1}^{d-1}m_jx_j\right) + b \right\}.$$
The family of these hyperplanes we will consider is
$$\mathcal H :=\left\{H(m_1, m_2, \dots, m_{d-1},b):m_j\in(-A), b\in B \right\}.$$
We now show that these hyperplanes are level sets for points in $F.$
\begin{lemma}\label{hyp}
For every $f\in F$, there is a unique $H_f\in\mathcal H$ so that for any $x\in H_f,$ we have $f\cdot x =1.$
\end{lemma}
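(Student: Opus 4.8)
The plan is to exhibit the hyperplane $H_f$ explicitly in terms of the parameters defining $f$, verify the dot-product identity by a one-line substitution, and then rule out any other hyperplane in $\mathcal H$ by a coefficient-matching argument. First I would unpack the defining data of a point $f \in F$: by construction $f = \left(\tfrac{-m_1}{b}, \tfrac{-m_2}{b}, \dots, \tfrac{-m_{d-1}}{b}, \tfrac{1}{b}\right)$ for some $m_j \in (-A)$ and $b \in B$, and since every element of $B$ is a positive real the last coordinate $1/b$ is nonzero, so $b = 1/f_d$ and $m_j = -f_j/f_d$ are recovered uniquely from $f$. Thus it makes sense to speak of \emph{the} parameters $(m_1,\dots,m_{d-1},b)$ attached to $f$.

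Next I would propose the candidate $H_f := H(m_1, m_2, \dots, m_{d-1}, b) \in \mathcal H$ and check the identity. If $x = (x_1,\dots,x_d) \in H_f$, then $x_d = \left(\sum_{j=1}^{d-1} m_j x_j\right) + b$, so
\[
f \cdot x = \sum_{j=1}^{d-1} \frac{-m_j}{b}\, x_j + \frac{1}{b}\, x_d = \sum_{j=1}^{d-1} \frac{-m_j}{b}\, x_j + \frac{1}{b}\left(\sum_{j=1}^{d-1} m_j x_j + b\right) = \frac{1}{b}\cdot b = 1,
\]
where the sums over $j$ cancel. This establishes existence.

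For uniqueness, suppose $H' = H(n_1,\dots,n_{d-1},c) \in \mathcal H$ also satisfies $f \cdot x = 1$ for every $x \in H'$. Substituting $x_d = \sum_j n_j x_j + c$ into $f \cdot x$ exactly as above yields
\[
f \cdot x = \sum_{j=1}^{d-1} \frac{n_j - m_j}{b}\, x_j + \frac{c}{b} = 1
\]
for all $(x_1,\dots,x_{d-1}) \in \mathbb R^{d-1}$, since these coordinates parametrize $H'$ freely. A linear function of $(x_1,\dots,x_{d-1})$ that is identically $1$ must have all its slopes vanish and constant term $1$; hence $n_j = m_j$ for each $j$ and $c/b = 1$, i.e.\ $c = b$. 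Therefore $H' = H(m_1,\dots,m_{d-1},b) = H_f$, which proves uniqueness.

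I do not anticipate a real obstacle here: the statement reduces to the elementary fact that the affine functional defining a non-vertical hyperplane determines its coefficients, combined with the observation that the parametrization $ (m_1,\dots,m_{d-1},b) \mapsto f$ is a bijection onto $F$ because $b \in B$ forces $1/b \neq 0$. The only point requiring a word of care is to note that two syntactically different tuples could in principle name the same hyperplane, which is why one should phrase uniqueness at the level of hyperplanes (as above) rather than of index tuples; the graph form $x_d = \sum_j m_j x_j + b$ makes this automatic.
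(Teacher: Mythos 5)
Your proof is correct and, for the existence half, is essentially the paper's own argument: substitute the parametrization of $x \in H(m_1,\dots,m_{d-1},b)$ into $f \cdot x$ with $f$ built from the same tuple, and watch the cross-terms cancel to leave $b/b = 1$. Where you go beyond the paper is on uniqueness: the paper's proof stops after the verification computation and never addresses why the hyperplane is unique (presumably treating it as immediate, since any hyperplane contained in the affine hyperplane $\{x : f \cdot x = 1\}$ must coincide with it). Your coefficient-matching argument — substituting a competing $H(n_1,\dots,n_{d-1},c)$ and forcing $n_j = m_j$, $c = b$ from the identity of affine functionals on $\mathbb{R}^{d-1}$ — makes that step explicit, as does your observation that $b \in B$ gives $1/b \neq 0$ so the tuple $(m_1,\dots,m_{d-1},b)$ is recoverable from $f$, which is what makes the notation $H_f$ well-defined in the first place. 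This is a modest but genuine improvement in completeness over the paper's one-computation proof; nothing in your argument is wasted or incorrect.
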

\begin{proof}
To see this, fix a $d$-tuple, $(m_1, m_2, \dots, m_{d-1},b)\in(-A)^{d-1}\times B,$ and compute the dot product of the element of $F$ and any point on the hyperplane associated to the same $d$-tuple. So the associated point $f\in F$ will be
$$f= \left(\frac{-m_1}{b}, \frac{-m_2}{b}, \dots, \frac{-m_{d-1}}{b}, \frac{1}{b}\right),$$
and we will consider an arbitrary point $x\in H(m_1, m_2, \dots, m_{d-1},b).$ Their dot product will be
\begin{align*}
f\cdot x &= \left(\frac{-m_1}{b}, \frac{-m_2}{b}, \dots, \frac{-m_{d-1}}{b}, \frac{1}{b}\right) \cdot \left(x_1, x_2, \dots, x_{d-1}, \left(\sum_{j=1}^{d-1}m_jx_j \right)+b \right)\\
&=\left(\sum_{j=1}^{d-1}\frac{-m_jx_j}{b}\right) + \left(\sum_{j=1}^{d-1}\frac{m_jx_j}{b}\right) +\frac{b}{b}=1.
\end{align*}
\end{proof}
We now pull this all together to quantify how many point pairs in $E\cup F$ determine the dot product 1.
\begin{proposition}\label{discreteDP}
There are $\approx q^{2d}$ point pairs determining the dot product 1 in $E\cup F.$
\end{proposition}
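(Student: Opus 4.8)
The plan is to count incidences between the point set $E$ and the hyperplane family $\mathcal H$, and then observe that each such incidence corresponds to a pair in $E \cup F$ with dot product $1$, via Lemma \ref{hyp}. First I would record the two cardinalities that matter: $|E| = q^{d+1} \approx n$ by construction, and $|\mathcal H| = |F| = q^{d-1} \cdot q^2 = q^{d+1}$, since $F$ is parametrized by $(m_1,\dots,m_{d-1}) \in (-A)^{d-1}$ and $b \in B$, with $|{-A}| = q$ and $|B| = q^2$. So both families have size $\approx q^{d+1}$.

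Next I would verify that $E$ genuinely lies on many hyperplanes of $\mathcal H$. Fix a point $x = (x_1,\dots,x_{d-1},x_d) \in E = A^{d-1}\times B$. For a hyperplane $H(m_1,\dots,m_{d-1},b) \in \mathcal H$ to pass through $x$ we need $x_d = \sum_{j=1}^{d-1} m_j x_j + b$. The key arithmetic point is the way $A$ and $B$ are scaled: each $x_j \in A$ has the form $\frac{q+i_j}{2q}$, each $-m_j \in A$ has the form $\frac{q+k_j}{2q}$ (so $m_j = -\frac{q+k_j}{2q}$), and $b \in B$ has the form $\frac{q^2+\ell}{2q^2}$, while $x_d \in B$ has the form $\frac{q^2+\ell'}{2q^2}$. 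Then $m_j x_j = -\frac{(q+k_j)(q+i_j)}{4q^2}$, and the whole relation $x_d - b = \sum_j m_j x_j$ becomes an equation with common denominator $4q^2$ whose numerators are controlled integers of size $O(q^2)$. The point is that for each of the $\approx q^{d-1}$ choices of $(x_1,\dots,x_{d-1}) \in A^{d-1}$ and each of the $\approx q^{d-1}$ choices of $(m_1,\dots,m_{d-1}) \in (-A)^{d-1}$, the quantity $\sum_j m_j x_j$ is a rational number with denominator dividing $4q^2$ lying in a bounded interval, so that there is a positive proportion of pairs $\big((x_1,\dots,x_{d-1}),(m_1,\dots,m_{d-1})\big)$ for which $x_d - \sum_j m_j x_j$ lands exactly in $B$ for a valid choice of $b$, hence $x_d \in B$ is attained and the remaining coordinate $b$ is then uniquely determined. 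Counting this way: we have $\approx q^{d-1}$ choices of $(m_1,\dots,m_{d-1})$, $\approx q^{d-1}$ choices of $(x_1,\dots,x_{d-1})$, and for a constant fraction of these the value $x_d := \sum_j m_j x_j + b$ can be made to lie in $B$ by choosing $b \in B$ appropriately; but $b$ then ranges over $\approx q^2$ values, each producing a distinct $x_d \in B$ — so altogether we get $\approx q^{d-1}\cdot q^{d-1} \cdot q^2 = q^{2d}$ triples $\big(x \in E,\, H \in \mathcal H\big)$ with $x \in H$. (Equivalently: fix $(m_1,\dots,m_{d-1}) \in (-A)^{d-1}$ and $b \in B$, giving one hyperplane; for each of the $\approx q^{d-1}$ choices of $(x_1,\dots,x_{d-1}) \in A^{d-1}$ the last coordinate $x_d = \sum m_j x_j + b$ is forced, and it lies in $B$ for a positive fraction of the $(x_j)$; so each hyperplane contains $\gtrsim q^{d-1}$ points of $E$, giving $\gtrsim q^{d+1}\cdot q^{d-1} = q^{2d}$ incidences, and the upper bound $\lesssim q^{2d}$ is trivial since $|E||\mathcal H| = q^{2(d+1)}$ is too weak — here one uses instead that each hyperplane meets $A^{d-1}\times B$ in at most $q^{d-1}$ points, as the first $d-1$ coordinates determine the last.)

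Finally, by Lemma \ref{hyp}, each hyperplane $H \in \mathcal H$ equals $H_f$ for the unique $f \in F$ with the same $d$-tuple, and then $f \cdot x = 1$ for every $x \in H$. Thus every incidence $(x,H)$ with $x \in E$ yields a pair $(f,x) \in F \times E \subset (E\cup F)\times(E\cup F)$ with $f \cdot x = 1$. Distinct incidences give distinct pairs since $f$ is recovered from $H$ and $x$ is recovered from itself. Therefore the number of point pairs in $E \cup F$ with dot product $1$ is $\approx q^{2d}$, which is the claim. The main obstacle is the middle step: one must check that the scaling in the definitions of $A$ and $B$ — the factor $2q$ versus $2q^2$ — is exactly what makes $\sum_{j=1}^{d-1} m_j x_j + b$ land in $B$ for a positive proportion of configurations (rather than a vanishing proportion, or forcing us outside $[0,2]^d$), so that the incidence count is genuinely of order $q^{2d}$ and not smaller; everything else is bookkeeping of cardinalities and an application of Lemma \ref{hyp}.
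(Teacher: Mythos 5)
Your approach is the same as the paper's: count point--hyperplane incidences between $E$ and $\mathcal H$, then use Lemma \ref{hyp} to convert each incidence $(x,H_f)$ into a dot-product-$1$ pair $(f,x)$. The paper's own proof is terser than yours: it simply asserts ``each $H_f$ will contain $\approx q^{d-1}$ points from $E$'' and multiplies by $|F|=q^{d+1}$; you add the easy upper bound (each $H_f$ meets $A^{d-1}\times B$ in at most $q^{d-1}$ points, since the first $d-1$ coordinates force the last) and the observation that distinct incidences give distinct pairs, both of which the paper leaves implicit. The one thing you flag as ``the main obstacle'' — that for a positive proportion of $(m_1,\dots,m_{d-1},x_1,\dots,x_{d-1},b)$ the forced value $x_d=\sum_j m_jx_j+b$ actually lands in $B$ — is precisely the step the paper also does not justify. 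I would caution you, though, that a direct attempt to close this gap with the definitions as written hits a sign/range obstruction: $m_j\in(-A)\subset[-1,-\tfrac12)$ and $x_j\in A\subset(\tfrac12,1]$, so $\sum_{j=1}^{d-1}m_jx_j<-\tfrac{d-1}{4}$, and since $b\le 1$ this forces $x_d<1-\tfrac{d-1}{4}\le\tfrac12$ for $d\ge 3$, whereas $B\subset(\tfrac12,1]$. So the ``positive proportion'' your sketch relies on would need a rescaling (of $A$, $B$, or the hyperplane coefficients) before it can be made to work; simply asserting it, as both you and the paper do, leaves the key lower bound unproved.
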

\begin{proof}
Fix an arbitrary point $f\in F$. Notice that the associated hyperplane, $H_f\in \mathcal H,$ consists of points that have dot product 1 with $f.$ However, each $H_f$ will contain $\approx q^{d-1}$ points from $E$. Since there are $q^{d+1}$ choices for $f,$ we have a total of $\approx q^{d+1}q^{d-1}=q^{2d}$ point pairs whose dot product is 1, as claimed.
\end{proof}

\subsubsection{Separation}

In order to construct our measure from the discrete point sets $E$ and $F$, we will need to show that these sets are not too ``low-dimensional" as were the point sets from Proposition \ref{silly}. To be sure we will need to show that the points are separated, and that they will not concentrate mass too much. We make this precise below.

By the definitions of $A$ and $B$, we know that the minimum difference between distinct coordinates of points in $E$ is $q^{-2}.$ This gives us
\begin{equation}\label{Esep}
\min_{\substack{p, p'\in E\\ p\neq p'}}{|p-p'|}\gtrsim \frac{1}{q^2}.
\end{equation}
However, $F$ will take a little more work.

\begin{lemma}\label{Sep}
$$\min_{\substack{p, p'\in F\\ p\neq p'}}{|p-p'|}=\frac{1}{q^2}.$$
\end{lemma}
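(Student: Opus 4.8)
The plan is to analyze the coordinates of an arbitrary point of $F$ and show that the minimum separation is governed by the last coordinate, which lives in $1/B = \{b^{-1} : b \in B\}$. First I would fix two distinct points $f = (-m_1/b, \dots, -m_{d-1}/b, 1/b)$ and $f' = (-m_1'/b', \dots, -m_{d-1}'/b', 1/b')$ in $F$ and split into two cases according to whether $b = b'$ or $b \neq b'$. If $b = b'$, then some $m_j \neq m_j'$, and since $m_j, m_j' \in (-A)$ differ by a multiple of $(2q)^{-1}$ while $b \leq 1$ (indeed $b \in (1/2,1]$), the $j$-th coordinates of $f$ and $f'$ differ by at least $\frac{1}{b}\cdot\frac{1}{2q} \geq \frac{1}{2q} \gtrsim \frac{1}{q^2}$; I would want to check this is actually $\geq 1/q^2$ on the nose to match the claimed equality, which should follow since $b \leq 1$ forces the gap to be at least $1/(2q) > 1/q^2$ for large $q$.

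The main case is $b \neq b'$. Here the key observation is that the last coordinates are $1/b$ and $1/b'$ with $b = \frac{q^2+i}{2q^2}$, $b' = \frac{q^2+i'}{2q^2}$ for distinct $i, i' \in \{1,\dots,q^2\}$. Then
\[
\left|\frac{1}{b} - \frac{1}{b'}\right| = \frac{|b-b'|}{bb'} = \frac{|i - i'|/(2q^2)}{bb'} \geq \frac{1/(2q^2)}{bb'}.
\]
Since $b, b' \in (1/2, 1]$, we have $bb' \leq 1$, so this is $\geq \frac{1}{2q^2}$; and since $bb' > 1/4$, it is $\lesssim \frac{1}{q^2}$ too. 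To pin down the exact constant $\frac{1}{q^2}$ claimed in the lemma, I would look at the extreme configuration: the smallest value of $\left|\frac{1}{b}-\frac{1}{b'}\right|$ over consecutive $i, i'$ with $b, b'$ as large as possible, i.e. $i = q^2 - 1$, $i' = q^2$, giving $b = 1 - \frac{1}{2q^2}$, $b' = 1$, and then $\frac{1}{b} - \frac{1}{b'} = \frac{1}{2q^2 - 1} \cdot \frac{1}{1} \cdot \frac{1}{?}$ — I would compute this exactly; it comes out slightly larger than $\frac{1}{2q^2}$, so whichever configuration realizes the true minimum, the stated value $\frac{1}{q^2}$ is presumably meant up to the implied constants, or else the construction of $B$ is calibrated so the minimum is exactly $\frac{1}{q^2}$; I would reconcile the exact statement with the arithmetic of $B$ at this point, possibly noting that the lemma should be read as $\approx \frac{1}{q^2}$.

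In all cases, since any two distinct points of $F$ either differ in a non-final coordinate by $\gtrsim \frac{1}{q}$ (hence $\gtrsim \frac{1}{q^2}$) or differ in the final coordinate by $\approx \frac{1}{q^2}$, we conclude $|f - f'| \gtrsim \frac{1}{q^2}$, and the $b \neq b'$ analysis with consecutive indices shows this is sharp, giving $\min_{f \neq f'} |f - f'| \approx \frac{1}{q^2}$. The main obstacle I anticipate is not conceptual but bookkeeping: carefully tracking the $bb'$ denominator to get both the lower bound and the matching upper bound with the precise constant, and making sure the $b = b'$ sub-case genuinely cannot produce a separation smaller than $\frac{1}{q^2}$ (it cannot, because dividing $A$-gaps of size $\frac{1}{2q}$ by $b \leq 1$ only increases them). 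A secondary subtlety is that the lemma asserts an exact equality $\frac{1}{q^2}$ rather than $\approx \frac{1}{q^2}$; I would either verify the arithmetic makes this exact or treat the equality as shorthand for the two-sided bound, which is all that is needed downstream for the energy and separation estimates.
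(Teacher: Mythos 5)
Your proposal is correct and follows essentially the same approach as the paper: split into the cases $b=b'$ and $b\neq b'$, bound the coordinate differences, and observe that the $b\neq b'$ case gives the governing $\approx q^{-2}$ scale while the $b=b'$ case gives the larger $\gtrsim q^{-1}$ scale. Your observation that the stated exact equality $\frac{1}{q^2}$ should really be read as a two-sided $\approx$ bound is well taken — the paper's own proof only establishes the lower bound $\gtrsim q^{-2}$, and the only downstream use (Proposition \ref{sep}) is insensitive to the constant.
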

\begin{proof}
Consider two arbitrary distinct points, $p,p'\in F.$
$$p= \left(\frac{-m_1}{b}, \frac{-m_2}{b}, \dots, \frac{-m_{d-1}}{b}, \frac{1}{b}\right),$$
and
$$p'= \left(\frac{-m_1'}{b'}, \frac{-m_2'}{b'}, \dots, \frac{-m_{d-1}'}{b'}, \frac{1}{b'}\right),$$
where the $m_j$ and $m_j'$ come from $A$ and $b,b'\in B.$ We now split into two cases: the case where $b\neq b',$ and the case where $b=b'.$

If $b\neq b',$ we have that, for appropriate choices of $1 \leq i_d,i_d'\leq q^2,$
$$|p-p'|\geq \left|\frac{1}{b}-\frac{1}{b'}\right| = \left|\frac{b'-b}{bb'}\right|=\left|\frac{\frac{q^2+i_d'}{2q^2}-\frac{q^2+i_d}{2q^2}}{\left(\frac{q^2+i_d}{2q^2}\right)\left(\frac{q^2+i_d'}{2q^2}\right)}\right|$$
$$=2q^2\left| \frac{(q^2+i_d')-(q^2+i_d)}{(q^2+i_d)(q^2+i_d')}\right|=2q^2\left|\frac{i_d'-i_d}{q^4+q^2(i_d+i_d')+i_di_d'}\right|,$$
which, because $i_d,i_d'\leq q^2,$ is bounded below by
$$\geq 2q^2\left|\frac{i_d'-i_d}{q^4+q^2(q^2+q^2)+q^2q^2} \right|\geq 2q^2\left|\frac{1}{q^4+q^2(q^2+q^2)+q^2q^2} \right|\gtrsim q^{-2},$$
where we used the fact that $i_d\neq i_d'$ in the last line. Putting this together, we obtain that for $p$ and $p'$ whose last coordinates are different,
\begin{equation}\label{dthDiff}
|p-p'|\gtrsim q^{-2}.
\end{equation}

If $b=b',$ then for $p$ and $p'$ to be distinct points, they must differ in some other coordinate. Suppose that $p$ has the value $\frac{-m}{b}$ in that coordinate, and $p'$ has the value $\frac{-m'}{b}$ in the same coordinate, for distinct $m,m'\in (-A)$. Then we can be assured that for appropriate $1 \leq i,i'\leq q$ and $1\leq i_d\leq q^2,$ we have
$$|p-p'|\geq\left|\frac{-m}{b}-\frac{-m'}{b}\right|=\left|\frac{-\left(\frac{-(q+i)}{2q}\right)}{\frac{q^2+i_d}{2q^2}}-\frac{-\left(\frac{-(q+i')}{2q}\right)}{\frac{q^2+i_d}{2q^2}}\right|$$
$$=q\left|\frac{(q+i)-(q+i')}{q^2+i_d}\right|\geq q\left|\frac{i-i'}{q^2+i_d} \right|,$$
which, because $i_d \leq q^2,$ is bounded below by
$$\geq q\left| \frac{i-i'}{2q^2}\right|\geq q\frac{1}{2q^2}\gtrsim q^{-1},$$
where we used the fact that $i\neq i',$ as they correspond to the distinct values of $m$ and $m'$ in the coordinate where $p$ and $p'$ differ. Putting these together, we get for distinct $p$ and $p'$ with the same last coordinate,
\begin{equation}\label{ithDiff}
|p-p'|\gtrsim q^{-1}.
\end{equation}
Combining \eqref{dthDiff} and \eqref{ithDiff} yields the desired result.
\end{proof}
Notice that by \eqref{Esep} and Lemma \ref{Sep}, we get the following.
\begin{proposition}\label{sep}
There are no more than a constant number of points of $E \cup F$ in any cube of side-length $q^{-2}.$
\end{proposition}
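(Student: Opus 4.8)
The plan is to deduce this directly from the two separation estimates just established, via a standard volume--packing argument applied to $E$ and to $F$ separately.

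First I would recall the elementary fact that a $\delta$-separated set of points in $\mathbb R^d$ meets any cube of side-length $\ell$ in at most $C_d(1+\ell/\delta)^d$ points: placing a ball of radius $\delta/2$ around each such point yields pairwise disjoint balls contained in the $(\delta/2)$-neighborhood of the cube, and comparing Lebesgue measures gives the bound. This is purely a packing estimate and requires nothing about the specific structure of $E$ or $F$ beyond separation.

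Applying this with $\ell=q^{-2}$: by \eqref{Esep} the set $E$ is $cq^{-2}$-separated for an absolute constant $c>0$, so any cube of side-length $q^{-2}$ contains at most $C_d(1+1/c)^d=O_d(1)$ points of $E$; by Lemma \ref{Sep} the set $F$ is $q^{-2}$-separated, so any such cube contains at most $C_d 2^d=O_d(1)$ points of $F$. Adding the two bounds, any cube of side-length $q^{-2}$ contains $O_d(1)$ points of $E\cup F$, with a constant depending only on $d$ and in particular independent of $q$ (equivalently, of $n$), which is exactly the uniformity needed for the measure construction to follow.

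The only point to be careful about is that we have no lower bound on the distance between a point of $E$ and a point of $F$ --- such points may in fact coincide or be arbitrarily close --- so one cannot treat $E\cup F$ as a single separated set and run the packing argument once. This is not a genuine obstacle, however: bounding $|E\cap Q|$ and $|F\cap Q|$ separately and summing suffices, since the target is merely an $O_d(1)$ bound and a factor of $2$ is harmless.
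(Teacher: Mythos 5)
Your proof is correct and fills in the packing argument that the paper treats as immediate from \eqref{Esep} and Lemma \ref{Sep}, stating Proposition \ref{sep} without a written proof. Your observation that $E$ and $F$ must be handled separately (since no $E$-to-$F$ separation is established, and indeed points of $E$ and $F$ can be close or even coincide near the corner where both sets approach the point with all coordinates equal to $1$) is a genuine subtlety the paper glosses over, and your remedy of bounding $|E\cap Q|$ and $|F\cap Q|$ separately is exactly the right fix.
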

\subsubsection{Discrete energy}
Now that we know our discrete points are sufficiently separated, we turn our attention to showing that they are also not clumped up too much. We quantify this by calculating the discrete versions of the associated energy integrals, namely
$$I_s'(X,Y)=\frac{1}{{n \choose 2}}\sum_{\substack{p \in X,\\ p'\in Y,\\ p\neq p'}}|p-p'|^{-s},$$
with the convention that we write $I_s'(X)$ in place of $I_s'(X,X).$ Also notice that $I_s'(X,Y)=I_s'(Y,X),$ by definition.
We will break the total discrete energy of our point set into three parts, each treated separately:
$$I_s'(E \cup F)= I_s'(E) + 2I_s'(E,F) + I_s'(F).$$
The discrete energy calculation for $E$ is given as Lemma 2.1 in \cite{IS16}. This gives
\begin{equation}\label{enE}
I_s'(E)\lesssim 1.
\end{equation}
Now we compute the discrete energy for $F$, recalling that $n\approx q^{d+1}.$
$$
I_s'(F)=\frac{1}{{n \choose 2}}\sum_{\substack{p,p' \in E,\\ p\neq p'}}|p-p'|^{-s}\approx q^{-2d-2}\sum_{\substack{p,p' \in F,\\ p\neq p'}}|p-p'|^{-s}= I + II,
$$
where $I$ is the sum over $p$ and $p'$ whose first $(d-1)$ coordinates are the same, and $II$ is its complement.
\subsubsection{Bounding $I$}
To estimate $I$, we mirror the derivation of \eqref{dthDiff}, and use similar notation for coordinates of points. Define $\mathcal F$ to be the set of point pairs of $F$ that agree in the first $(d-1)$ coordinates. That is,
$$\mathcal F :=\{(p, p')\in F\times F :p_d\neq p_d', p_j=p_j', j=1,\dots,d-1\}.$$
 Because the sum is over pairs of points that agree on the first $(d-1)$ coordinates, we only need to focus on the difference in the final coordinate. Recall that coordinates $p_j$ are written in terms of their respective indices $i_j$ as before.
\begin{align*}
I &\approx n^{-2}\sum_{(p, p')\in \mathcal F}|p_d-p_d'|^{-s}\lesssim q^{-2d-2}\sum_{p\in F}\sum_{\substack{i_d'\in[1..q^2],\\ i_d'\neq i_d}}\left|\frac{2q^2}{q^2+i_d}-\frac{2q^2}{q^2+i_d'}\right|^{-s}\\
&\leq q^{-2d-2}\sum_{i_1=1}^q\cdots\sum_{i_{d-1}=1}^q\sum_{i_d=1}^{q^2}\sum_{\substack{i_d'\in[1..q^2],\\ i_d'\neq i_d}}(2q^2)^{-s}\left|\frac{1}{q^2+i_d}-\frac{1}{q^2+i_d'}\right|^{-s}\\
&\lesssim q^{-2s-2d-2}q^{d-1}\sum_{i_d=1}^{q^2}\sum_{\substack{i_d'\in[1..q^2],\\ i_d'\neq i_d}}\left|\frac{1}{q^2+i_d}-\frac{1}{q^2+i_d'}\right|^{-s}.
\end{align*}
The partial sum with $i_d'<i_d$ is the same as its complement, so we rewrite it as twice the sum with $i_d'>i_d$. We will also use the fact that $1\leq i_d,i_d'\leq q^2$ to bound the denominator below.
\begin{align*}
I &\lesssim q^{-2s-d-3}\cdot 2 \sum_{i_d=1}^{q^2-1}\sum_{i_d' =i_d+1}^{q^2}\left|\frac{(q^2+i_d')-(q^2+i_d)}{q^4+q^2(i_d+i_d')+i_di_d'}\right|^{-s}\\
&\lesssim q^{-2s-d-3}\sum_{i_d=1}^{q^2-1}\sum_{i_d' = i_d+1}^{q^2}\left|\frac{i_d'-i_d}{4q^4}\right|^{-s}\lesssim q^{-2s-d-3} q^{4s}\sum_{i_d=1}^{q^2-1}\sum_{i_d' = i_d+1}^{q^2}(i_d'-i_d)^{-s}.
\end{align*}
We reparameterize the sum by setting $j:=i_d'-i_d$. We then approximate the sum by an integral.
\begin{align*}
I &\lesssim q^{2s-d-3}\sum_{i_d=1}^{q^2-1}\sum_{i_d' = i_d+1}^{q^2}(i_d'-i_d)^{-s}=q^{2s-d-3}\sum_{j=1}^{q^2-1}(q^2-j)j^{-s}\\
&\approx q^{2s-d-3}\int_1^{q^2}(q^2-x)x^{-s}dx\\
&=q^{2s-d-3}\left(\left[(q^2-x)-\left(\frac{x^{1-s}}{1-s}\right)\right]_1^{q^2}-\left[ \frac{x^{2-s}}{(1-s)(2-s)}\right]_1^{q^2} \right)\\
&\approx q^{2s-d-3}\left(q^2-\frac{q^{4-2s}}{(1-s)(2-s)} \right).
\end{align*}
Recall that by assumption, we have that $s>(d/2)>1,$ and notice that if $1<s<2,$ the second term in the parentheses is still bounded by a constant times $q^2.$ Putting this all together with the assumption that $s<(d+1)/2,$ we get
$$I\lesssim q^{2s-d-1}\lesssim 1.$$
\subsubsection{Bounding $II$}
The estimate of $II$ is similar, though significantly more involved.
\begin{align*}
II &= q^{-2d-2}\sum_{\substack{p\neq p' \in F,\\ (p, p')\notin \mathcal F}}|p-p'|^{-s}.
\end{align*}
We now estimate $II$ by replacing the $\ell^2$ distance by $\ell^1$ distance, losing at most a constant in the process. 
\begin{align*}
II&\leq q^{-2d-2}\sum_{\substack{p\neq p' \in F,\\ (p, p')\notin \mathcal F}}\left(\sum_{j=1}^d |p_j-p_j'|^2\right)^{-\frac{s}{2}}\lesssim q^{-2d-2}\sum_{\substack{p\neq p' \in F,\\ (p, p')\notin \mathcal F}}\left(\sum_{j=1}^d |p_j-p_j'|\right)^{-s}.
\end{align*}
We then rearrange the terms of the sum so that the largest difference between coordinate indices from the first $(d-1)$ is recorded in the first coordinate, with $i_1>i_1'.$ So we can continue bounding the above sum by
\begin{align*}
&\leq q^{-2d-2}2(d-1)\sum_{i_1'=1}^{q-1}\sum_{i_1=i_1'+1}^q\sum_{\substack{1\leq i_j,i_j'\leq q\\j=2, \dots, (d-1)}}\sum_{1\leq i_d,i_d' \leq q^2}\left(\sum_{j=1}^d |p_j-p_j'|\right)^{-s}.\\
\end{align*}
If we separate out the contribution of the difference between the $dth$ coordinates to the innermost sum, and absorb the multiplicative constants, we get
\begin{align*}
&\lesssim q^{-2d-2}\sum_{i_1=1}^{q-1}\sum_{i_1=i_1'+1}^q\sum_{\substack{1\leq i_j,i_j'\leq q\\j=2, \dots, (d-1)}}\sum_{1\leq i_d,i_d' \leq q^2}\\
&\qquad\qquad\left(\left|\frac{1}{\frac{q^2+i_d}{2q^2}}-\frac{1}{\frac{q^2+i_d'}{2q^2}}\right|+\sum_{j=1}^{d-1} \left|\frac{\frac{q+i_j}{2q}}{\frac{q^2+i_d}{2q^2}}-\frac{\frac{q+i_j'}{2q}}{\frac{q^2+i_d'}{2q^2}}\right|\right)^{-s}.\\
\end{align*}
Recalling that each of the differences in the first $(d-1)$ coordinates are dominated by the difference in the first coordinate, as per our reordering, we get that this is bounded above by
\begin{align*}
&\lesssim q^{-2d-2}\sum_{i_1=1}^{q-1}\sum_{i_1=i_1'+1}^q\sum_{\substack{1\leq i_j,i_j'\leq q\\j=2, \dots, (d-1)}}\sum_{1\leq i_d,i_d' \leq q^2}\\
&\qquad\qquad\left(\left|\frac{2q^2}{q^2+i_d}-\frac{2q^2}{q^2+i_d'}\right|+(d-1) \left|\frac{q^2+qi_1}{q^2+i_d}-\frac{q^2+qi_1'}{q^2+i_d'}\right|\right)^{-s}.\\
\end{align*}
Since the summands now no longer depend on the middle $(d-2)$ pairs of indices, we will remove them from consideration. Because the sum was reordered so that $|i_j-i_j'|\leq(i_1-i_1')$ for all $j\in [2.. (d-1)],$ we have control on the number of pairs of indices in each such dimension. More precisely, we have that for any $j\in[2..(d-1)],$ we have that the number of pairs $(i_j,i_j')$ contributing to this sum is no more than $2q(i_1-i_1')$, because we have $q$ choices for $i_j$, then no more than $2(i_1-i_1')$ choices for $i_j'$ within range. So the sum above can be bounded by
\begin{align*}
&\lesssim q^{-2d-2}\sum_{i_1=1}^{q-1}\sum_{i_1=i_1'+1}^q \left(q(i_1-i_1')\right)^{d-2}\sum_{1\leq i_d,i_d' \leq q^2}\\
&\qquad\qquad\left(\left|\frac{2q^2}{q^2+i_d}-\frac{2q^2}{q^2+i_d'}\right|+(d-1) \left|\frac{q^2+qi_1}{q^2+i_d}-\frac{q^2+qi_1'}{q^2+i_d'}\right|\right)^{-s}\\
&\lesssim q^{-d-4}\sum_{i_1=1}^{q-1}\sum_{i_1=i_1'+1}^q(i_1-i_1')^{d-2}\sum_{1\leq i_d,i_d' \leq q^2}\left(\left|\frac{q^2(i'_d-i_d)}{q^4+q^2(i_d+i_d')+i_di_d'}\right|+\right.\\
&\qquad\qquad\left. (d-1) \left|\frac{q^3(i_1-i_1')+q^2(i_d'-i_d)+q(i_1i_d'-i_1'i_d)}{q^4+q^2(i_d+i_d')+i_di_d'}\right|\right)^{-s},\\
\end{align*}
which, by recalling that $1\leq i_d,i_d'\leq q^2$ is bounded above by
\begin{align*}
&\lesssim q^{-d-4}\sum_{i_1=1}^{q-1}\sum_{i_1=i_1'+1}^q(i_1-i_1')^{d-2}\sum_{1\leq i_d,i_d' \leq q^2}\left(\left|\frac{q^2(i'_d-i_d)}{4q^4}\right|+\right.\\
&\qquad\qquad\left. (d-1) \left|\frac{q^3(i_1-i_1')+q^2(i_d'-i_d)+q(i_1i_d'-i_1'i_d)}{4q^4}\right|\right)^{-s}\\
&\lesssim q^{-d-4}\sum_{i_1=1}^{q-1}\sum_{i_1=i_1'+1}^q(i_1-i_1')^{d-2}\sum_{1\leq i_d,i_d' \leq q^2}q^{4s}\\
&\qquad\qquad\left(\left|q^2(i'_d-i_d)\right|+ (d-1) \left|q^3(i_1-i_1')+q^2(i_d'-i_d)+q(i_1i_d'-i_1'i_d)\right|\right)^{-s}.\\
\end{align*}
By adding and subtracting $i_1i_d$ within the final set of parentheses, this expression is
\begin{align*}
&= q^{4s-d-4}\sum_{i_1=1}^{q-1}\sum_{i_1=i_1'+1}^q(i_1-i_1')^{d-2}\sum_{1\leq i_d,i_d' \leq q^2}\left(\left|q^2(i'_d-i_d)\right|\right.+\\
&\qquad\qquad\left. (d-1) \left|q^3(i_1-i_1')+q^2(i_d'-i_d)+q(i_1i_d'-i_1i_d+i_1i_d-i_1'i_d)\right|\right)^{-s}\\
&= q^{4s-d-4}\sum_{i_1=1}^{q-1}\sum_{i_1=i_1'+1}^q(i_1-i_1')^{d-2}\sum_{1\leq i_d,i_d' \leq q^2}\left(\left|q^2(i'_d-i_d)\right|\right.+\\
&\qquad\qquad\left. (d-1) \left|q^3(i_1-i_1')+q^2(i_d'-i_d)+qi_1(i_d'-i_d)+qi_d(i_1-i_1')\right|\right)^{-s}\\
&= q^{4s-d-4}\sum_{i_1=1}^{q-1}\sum_{i_1=i_1'+1}^q(i_1-i_1')^{d-2}\sum_{1\leq i_d,i_d' \leq q^2}\left(\left|q^2(i'_d-i_d)\right|\right.+\\
&\qquad\qquad\left. (d-1) \left|(q^3+qi_d)(i_1-i_1')+(q^2+qi_1)(i_d'-i_d)\right|\right)^{-s}\\
&= q^{4s-d-4}\sum_{i_1=1}^{q-1}\sum_{i_1=i_1'+1}^q(i_1-i_1')^{d-2}\sum_{1\leq i_d,i_d' \leq q^2}(g(i_1, i_1', i_d, i_d'))^{-s},\\
\end{align*}
where we define $$g(i_1, i_1', i_d, i_d'):=|q^2(i_d'-i_d)|+ (d-1) |(q^3+qi_d)(i_1-i_1')+(q^2+qi_1)(i_d'-i_d)|.$$
We now need to look at these terms closely. If the following inequality holds for some positive constant $c$,
\begin{equation}\label{goodMiddle}
(q^3+qi_d)(i_1-i_1')+(q^2+qi_1)(i_d'-i_d)\geq cq^3(i_1-i_1'),
\end{equation}
then we can safely ignore the $q^2(i_d'-i_d)$ part of the term, and we get that
$$g(i_1, i_1', i_d, i_d')\gtrsim q^3(i_1-i_1').$$
However, it's possible that the factor $(i_d'-i_d)$ is so negative, that it could make \eqref{goodMiddle} fail to hold. In this case, we then have that for any constant $c>0$,
$$(q^2+qi_1)(i_d-i_d')>(q^3+qi_d-cq^3)(i_1-i_1').$$
Recalling that $i_1<q,$ this tells us that for any $c>0,$
$$2q^2(i_d-i_d')>(q^3+qi_d-cq^3)(i_1-i_1'),$$
which tells us that again
\begin{align*}
g(i_1, i_1', i_d, i_d')&= |q^2(i_d'-i_d)|+ (d-1) |(q^3+qi_d)(i_1-i_1')+(q^2+qi_1)(i_d'-i_d)|\\
&\gtrsim \frac{1}{2}(q^3+qi_d-cq^3)(i_1-i_1')\gtrsim q^3(i_1-i_1')\\
\end{align*}
So we can continue our analysis of $II$ by bounding each of these terms by some positive constant multiple of $q^3(i_1-i_1')$. Continuing, we now have that
\begin{align*}
II&\lesssim q^{4s-d-4}\sum_{i_1=1}^{q-1}\sum_{i_1=i_1'+1}^q(i_1-i_1')^{d-2}\sum_{1\leq i_d,i_d' \leq q^2}(q^3(i_1-i_1'))^{-s}\\
&=q^{4s-d-4}\sum_{i_1=1}^{q-1}\sum_{i_1=i_1'+1}^q(i_1-i_1')^{d-2}q^4q^{-3s}(i_1-i_i')^{-s}\\
&\lesssim q^{s-d}\sum_{i_1=1}^{q-1}\sum_{i_1=i_1'+1}^q(i_1-i_i')^{d-s-2}\\
&=q^{s-d}\sum_{j=1}^{q-1} (q-j)j^{d-s-2},
\end{align*}
where in the last step, we reparameterized the sum with $j:=i_1-i_1'.$ We now estimate this sum by an integral to get that the previous expression is bounded above by
\begin{align*}
&\leq q^{s-d} \int_1^q (q-x)x^{d-s-2}dx\\
&= q^{s-d}\left(\left[(q-x)\left(\frac{x^{d-s-1}}{d-s-1}\right)\right]_1^q-\int_1^q\frac{x^{d-s-1}}{d-s-1}(-1)dx\right)\\
&=q^{s-d}\left(\frac{-(q-1)}{(d-s-1)}+\frac{q^{d-s}-1}{(d-s-1)(d-s)}\right)\lesssim 1,
\end{align*}
where the last step follows by our assumption that $\frac{d}{2}<s<\frac{d+1}{2}.$

To conclude the energy estimate on $F$, we combine the above bounds on $I$ and $II$ to get
\begin{equation}\label{enF}
I_s'(F)\lesssim 1.
\end{equation}

Finally, we compute the energy between the sets $E$ and $F$. We break up the sum coordinatewise into

\begin{align*}
I_s'(E,F)&=\frac{1}{{n \choose 2}}\sum_{\substack{p \in E,\\ p'\in F}}|p-p'|^{-s}\lesssim n^{-2}\sum_{\substack{p\in E,\\ p'\in F}}\sum_{j=1}^d|p_j-p_j'|^{-s}\\
&= n^{-2}\sum_{\substack{i_j,i'_j\in A\\j=1,\dots, (d-1)}}\sum_{b,b'\in B}\left(\left[\sum_{j=1}^{d-1}\left|\frac{q+i_j}{2q}-\frac{\frac{q+i_j'}{2q}}{b'}\right|\right]+\left|\frac{q^2+b}{2q^2}-\frac{1}{b'}\right|\right)^{-s}\\
&\lesssim q^{-2d-2}\sum_{\substack{i_j,i'_j\in A\\j=1,\dots, (d-1)}}\sum_{b,b'\in B}\left|\frac{q^2+b}{2q^2}-\frac{1}{b'}\right|^{-s}\\
&= q^{-2d-2}(2q^2)^s\sum_{b,b'\in B}(b')^s\sum_{\substack{i_j,i'_j\in A\\j=1,\dots, (d-1)}}\left|b'q^2+bb'-2q^2\right|^{-s}\\
&\lesssim q^{2s-2d-2}\sum_{b,b'\in B}\sum_{\substack{i_j,i'_j\in A\\j=1,\dots, (d-1)}}\left|b'q^2+bb'-2q^2\right|^{-s},\\
\end{align*}
Where we have used the fact that any value of $b'$ is in the interval $\left(\frac{1}{2},1\right]$ in the last step. Using this fact again, we see that $\left|b'q^2+bb'-2q^2\right|\geq \frac{3}{2}q^2-1\geq q^2,$ so we can continue
\begin{align*}
I_s'(E,F)&\lesssim q^{2s-2d-2}\sum_{b,b'\in B}\sum_{\substack{i_j,i'_j\in A\\j=1,\dots, (d-1)}}\left|b'q^2+bb'-2q^2\right|^{-s}\\
&\lesssim q^{2s-2d-2}\sum_{b,b'\in B}\sum_{\substack{i_j,i'_j\in A\\j=1,\dots, (d-1)}}|q^2|^{-s}\\
&\lesssim q^{2s-2d-2}\cdot q^4\cdot q^{2(d-1)}\cdot q^{-2s}\lesssim 1.
\end{align*}
This gives us that
\begin{equation}\label{enEF}
I_s'(E,F)\lesssim 1.
\end{equation}

\subsubsection{Continuous construction}
Given an $s<\frac{d+1}{2},$ and an $\epsilon >0,$ fix a scale $q\approx \epsilon^{-\frac{s}{d+1}}$ and decompose $\mathbb R^d$ into a lattice of half-open cubes of side-length $\epsilon$, that is, translates of $[0,q^{-2})^d.$ By construction, we know that $E\cup F\subset [0,2]^d.$ Select any cube with a point from either $E$ or $F$ in it. Proposition \ref{sep} tells us that no square of side-length $q^{-2}\geq \epsilon$ can have more than a constant number of points in it, so we should have selected about $n$ cubes. Call this set of cubes $R.$ Now, we define $\mu$ to be supported on the union of these cubes, and so that $\int d\mu = 1.$ So we will have that the $\mu$ measure of a cube of side-length $\epsilon$ will be $\epsilon^s.$ More precisely,
\begin{equation}\label{muDef}
d\mu(x) = \epsilon^{s-d}\sum_{r\in R}\chi_r(x)dx.
\end{equation}
Now, we compute the energy of $\mu,$ which by \eqref{enE}, \eqref{enF}, and \eqref{enEF}, is
$$I_s(\mu)\approx I_s'(E\cup F)= I_s'(E)+2I_s'(E,F)+I_s'(F)\lesssim1.$$

Finally we compute the dot product version of the Falconer estimate with $\epsilon = q^{-\frac{d+1}{s}}$ and using Proposition \ref{discreteDP}.
$$(\mu\times\mu)\left\{(x,y)\in[0,2]^d:1 \leq x\cdot y \leq 1+\epsilon \right\}\approx q^{2d}\cdot \epsilon^{2s}\approx \epsilon^{\frac{2s}{d+1}},$$
as claimed.

\end{document}